\theoremstyle{plain}
\newtheorem{theorem}{Theorem}[section]
\newtheorem*{theoremA}{Theorem 1}
\newtheorem{lemma}[theorem]{Lemma}
\newtheorem{proposition}[theorem]{Proposition}
\theoremstyle{remark}
\newtheorem{remark}{Remark}[section]
\newtheorem{definition}{Definition}[section]
\newtheorem*{definition*}{Definition}
\newtheorem{example}{Example}[section]
\numberwithin{equation}{section}
\numberwithin{theorem}{section}
\numberwithin{remark}{section}
\def\scal#1#2{\langle #1, #2\rangle}
\def\R#1{\mathbb{R}^{#1}}
\DeclareMathOperator{\re}{\mathrm{Re}}
\DeclareMathOperator{\trace}{\mathrm{tr}}
\DeclareMathOperator{\Trace}{\mathrm{Tr}}
\def\F{\mathbb{F}}
\def\Jord{\mathcal{J}}
\def\Nabla{\,D}
\def\cbot{{e^\bot}}
\title{{A Jordan algebra approach to the cubic eiconal equation}}
\author{Vladimir Tkachev\thanks{Department of Mathematics, Link\"oping University, SE-581 83, Link\"oping, Sweden, tkatchev@kth.se}}
\begin{document}

\maketitle

\begin{abstract}
We establish a natural correspondence between (the equivalence classes of) cubic solutions of $\sum_{i,j=1}^k Q^{ij}\,\frac{\partial u}{\partial x_i}\frac{\partial u}{\partial x_j}=9(\sum_{i,j=1}^k Q_{ij}x_ix_j)^2$ and (the isomorphy classes of) cubic Jordan algebras.
\end{abstract}

\section{Introduction}

The Jordan-Von Neumann-Wigner classification \cite{JordanNeumann} implies that the only possible formally real rank 3 Jordan algebras $J$ are:

\bigskip
%\begin{table}{h}
\begin{tabular}{p{0.6cm}|p{4.3cm}p{0.05cm}|p{0.05cm}p{8.1cm}}
&a Jordan algebra, $J$ & & &the generic norm of a trace free element, $\sqrt{2}N(x)$
\\\hline
&&&&\\
%\hline
  % after \\: \hline or \cline{col1-col2} \cline{col3-col4} ...
(i) & $\mathrm{Herm}_3(\mathbb{F}_d),\,\, d=1,2,4,8$ & & &$u_d(x)$
    \\
&  &&&\\%\hline
%   &&&\\
(ii) &  $\R{}\oplus \mathrm{Spin}(\R{n+1})$ & && $4x_n^3-3x_n|x|^2$
  \\
&  &&&\\%\hline
%   &&&\\
(iii) &  $\mathbb{F}_1^3=\R{}\oplus \R{}\oplus\R{}$ & &&  $x_2^3-3x_2x_1^2$
  \\
 % a reduced algebra&&&
% \caption{}
\end{tabular}
 %\end{table}

\bigskip
\noindent
where $J$ is equipped with the standard inner product $\scal{x}{y}=\frac{1}{3}\Trace (x\bullet y)$ and
\begin{equation*}\label{CartanFormula0}
    \begin{split}
u_d(x) =x_{3d+2}^3&+\frac{3}{2}x_{3d+2}(|z_1|^2+|z_2|^2-2|z_3|^2-2x_{3d+1}^2)\\
&+\frac{3\sqrt{3}}{2}x_{3d+1}(|z_2|^2-|z_1|^2)+{3\sqrt{3}}\re (z_1z_2z_3),
\end{split}
\end{equation*}
is a Cartan isoparametric cubic. Here $z_k=(x_{kd-d+1},\ldots,x_{kd})\in \R{d}=\mathbb{F}_d$ and $\F_d$ stands for a real division algebra of dimension $d$, i.e. $\mathbb{F}_1=\mathbb{R}$ (the reals), $\mathbb{F}_2=\mathbb{C}$ (the complexes), $\mathbb{F}_4=\mathbb{H}$ (the quaternions) and $\mathbb{F}_8=\mathbb{O}$ (the octonions).
According to a recent result of the author \cite{TkCartan}, the cubic forms displayed in the  last column  are exactly the only  cubic polynomial solutions of
\begin{equation}\label{equivariant1}
|\nabla u(x)|^2=9|x|^4,\quad x\in \R{n}
\end{equation}
which yields a natural question: Does there exist any \textit{explicit} correspondence between cubic solutions of (\ref{equivariant1}) and formally real cubic Jordan algebras?

The concept of Jordan algebra, and formally  real Jordan algebra especially, was introduced originally in the 1930s in \cite{Jordan33A}, \cite{JordanNeumann} in attempt to find an adequate algebraic formalism for quantum mechanics. Nowadays, besides the numerous pure algebraic applications (in particular, the connection between Jordan and Lie theory via the Tits-Kantor-Koecher construction of Lie algebras), Jordan algebras appeared to be very useful in harmonic analysis on self-dual homogeneous cones (Vinberg-Koecher theory), operator theory (JB-algebras), differential geometry (symmetric spaces, projective geometry, isoparametric hypersurfaces), integrable hierarchies (Svinopulov's characterization of generalized KdV equation), statistics (Wishart distributions on Hermitian matrices and on Euclidean Jordan algebras).
In the most interesting for us case of cubic Jordan algebras, the Jordan multiplication structure can be recovered directly from the generic norm by the so-called Springer construction. The cubic case is also very distinguished because its very remarkable appearance in different areas such as Scorza and secant varieties, Cremona transformations, non-classical solutions to fully nonlinear PDEs, extremal black holes and supergravity \cite{Chaput02}, \cite{Chaput03}, \cite{Duff1}, \cite{Eting}, \cite{Kru}, \cite{Nadir}, \cite{Pirio1}, \cite{Pirio2}.

On the other hand, the eiconal equation (\ref{equivariant1}) naturally appears in the context of isoparametric hypersurfaces of the Euclidean spheres, i.e. hypersurfaces having only constant principal curvatures \cite{Cecil}. A celebrated result due to H.F.~M\"unzner \cite{Mun1}, \cite{Mun2}, asserts that the cone over an isoparametric
hypersurface is algebraic and its defining polynomial $u(x):\R{n}\to \R{}$ is homogeneous of degree $g = 1, 2, 3, 4$ or $6$ coinciding the number of distinct principal curvatures and is subject to the following conditions:
\begin{equation}\label{Muntzer1}
|\nabla u(x)|^2=g^2|x|^{2g-2}, \qquad \Delta u(x)=\frac{m_2-m_1}{2}\,g^2|x|^{g-2},
\end{equation}
where $m_1$  and $m_2$ are the multiplicities of the maximal and minimal principal curvatures of $M$ respectively (there holds $m_1=m_2$ when $g$ is odd). A classical result due to \'{E}lie~Cartan states that that any isoparametric  hypersurface with $g=3$ distinct curvatures  is obtained as a level set of a \textit{{harmonic}} cubic polynomial solution of (\ref{equivariant1}) and that there are exactly four (congruence classes of) cubic solutions given by (\ref{CartanFormula0}) for $d=1,2,4,8$.

In this paper we answer the above question by characterizing  the generic norm of an arbitrary Jordan algebra of rank 3 in terms of the general eiconal equation
\begin{equation}\label{equivariant2}
\sum_{i,j=1}^k Q^{ij}\,\frac{\partial u}{\partial x_i}\frac{\partial u}{\partial x_j}=9(\sum_{i,j=1}^k Q_{ij}x_ix_j)^2,\quad x\in \R{k},
\end{equation}
where $Q_{ij}$ is an invertible matrix with $Q^{ij}$ being its inverse. It is more natural, however, to work with an invariant version of (\ref{equivariant2}) which we now define.
Let $(W,Q)$ be a nondegenerate quadratic space, i.e.  $W$ be a vector space over a field $\F$  ($\F$ is always assumed to be $\R{}$ or $\mathbb{C}$) with a quadratic form  $Q:W\to \F$  such that the associated symmetric bilinear form
\begin{equation*}
\label{Qpol}
Q(x;y)=\frac{1}{2}(Q(x+y)-Q(x)-Q(y)), \qquad x, y\in W
\end{equation*}
has the kernel $\{0\}$. The bilinear form $Q(x;y)$ induces a natural inner product on $W$. Let $\nabla u(x)$ denote the covariant derivative of a smooth function $u:W\to \F$, i.e. the unique vector field satisfying the dual realtion
\begin{equation}\label{eqGrad}
Q(\nabla u(x);y)=\partial_y u|_{x}
%\equiv \lim_{t\to 0}\frac{u(x+ty)-u(x)}{t}
, \qquad \forall y\in W.
\end{equation}

\medskip
\begin{definition}
A triple $(W,Q,u)$ with a cubic form $u:W\to \F$ on a nondegenerate quadratic space $(W,Q)$ satisfying
\begin{equation}\label{equivariant}
Q(\nabla u(x))=9Q(x)^2, \quad x\in W
\end{equation}
is called a \textit{eiconal triple}.
\end{definition}

\medskip

\begin{remark}
Notice that in the local coordinates $x_i=Q(x,e_i)$ associated with a basis  $\{e_i\}_{1\le i\le n}$ of $W$, the eiconal equation (\ref{equivariant}) takes the form (\ref{equivariant2}) with  $Q_{ij}=Q(e_i;e_j)$. In particular, (\ref{equivariant}) corresponds to the Euclidean norm $Q(x)=|x|^2\equiv \sum_{i=1}^{n}x_i^2$ on  $W=\R{n}$.
\end{remark}

\medskip
Recall that a map $O:(W,Q)\to (\tilde W,\tilde Q)$ between the quadratic spaces $(W,Q)$ and $(\tilde W,\tilde Q)$ is called an isometry of the two quadratic spaces if $O$ is invertible and $\tilde Q(Ox)=Q(x)$ for all $x\in W$. It is straightforward to verify that if $(W,Q,u)$ is an eiconal triple and $O:(W,Q)\to (\tilde W,\tilde Q)$ is an isometry then $(\tilde W,\tilde Q, \tilde u)$ with $\tilde u(O x)=u(x)$ is an eiconal triple too. We call such eiconal triples \textit{equivalent}.

\begin{theoremA}\label{Theorem}
 There exists a  correspondence $\mathcal{J}\xrightarrow{\alpha}\mathcal{E}\xrightarrow{\beta}\mathcal{J}$, $\alpha  \beta=\mathrm{id}_\mathcal{E}$, $\beta  \alpha=\mathrm{id}_\mathcal{J}$, between the equivalence classes of eiconal triples ($\mathcal{E}$) and the isomorphic classes of cubic Jordan algebras ($\mathcal{J}$) and defined as follows.
 \begin{itemize}
 \item[$(\alpha)$]
 Any cubic Jordan algebra  $J=J(V,N,e)$ with the generic norm $N$ gives rise to an eiconal triple 
 $
 (e^\bot,\tau, u)
 $ 
 with the underlying vector space 
 $$
 e^\bot\doteq\{x:\Trace x=0\}
 $$ 
 the quadratic form $\tau(x)=\frac{1}{3}\Trace(x^2)|_{e^\bot}$, and the cubic form $u(x)=\sqrt{2}\,N(x)$. Two isomorphic algebras give rise to equivalent eiconals.

 \item[$(\beta)$]
In the converse direction, any eiconal triple $(W,Q,u)$ gives rise to a cubic Jordan algebra 
$
J=J(V,N,e)
$ 
with the underlying vector space $V=\F\times W$, the generic norm
$$
N(\mathbf{x})=x_0^3-\frac{3}{2} x_0Q(x)+\frac{ u(x)}{\sqrt{2}}, 
$$
and the unit $e=(1,0)$, where $\mathbf{x}=(x_0,x)\in V$. Two equivalent eiconals produces isomorphic algebras.

\end{itemize}
%The described correspondence is natural in the sense that $\alpha  \beta=\mathrm{id}_\mathcal{J}$ and $\beta  \alpha=\mathrm{id}_\mathcal{E}$.

\end{theoremA}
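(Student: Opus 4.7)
The plan is to invoke the Freudenthal--Springer characterization of cubic Jordan algebras: a pointed cubic form $(V,N,e)$ with $N(e)=1$ is the generic norm of a cubic Jordan algebra precisely when the Freudenthal adjoint $\mathbf{x}^{\#}$, defined by the relation $T(\mathbf{x}^{\#};\mathbf{y})=3N(\mathbf{x};\mathbf{x};\mathbf{y})$ with respect to the trace bilinear form, satisfies the adjoint identity $(\mathbf{x}^{\#})^{\#}=N(\mathbf{x})\,\mathbf{x}$. Both directions $\alpha$ and $\beta$ are then established by explicit computation.

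For $\alpha$, I start from a cubic Jordan algebra $J=J(V,N,e)$. For trace-free $x\in e^{\bot}$ the generic minimal polynomial collapses to $x^3=\tfrac{1}{2}\Trace(x^2)\,x+N(x)\,e$, hence $x^{\#}=x^2-\tfrac{1}{2}\Trace(x^2)\,e$. The defining relation $\tau(\nabla u(x);y)=\partial_y u(x)=3\sqrt{2}\,\langle x^{\#},y\rangle$ for $y\in e^{\bot}$ then yields
\begin{equation*}
\nabla u(x)=3\sqrt{2}\bigl(x^2-\tau(x)\,e\bigr)\in e^{\bot},
\end{equation*}
from which $\tau(\nabla u(x))=18\bigl(\langle x^2,x^2\rangle-\tau(x)^2\bigr)$. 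The eiconal identity $\tau(\nabla u(x))=9\tau(x)^2$ thus reduces to $\Trace(x^4)=\tfrac{1}{2}\Trace(x^2)^2$, which in turn follows by multiplying the Cayley--Hamilton relation above by $x$ and taking the trace (using $\Trace(x)=0$).

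For $\beta$, polarization of $N(\mathbf{x})=x_0^3-\tfrac{3}{2}x_0Q(x)+\tfrac{u(x)}{\sqrt{2}}$ about $e=(1,0)$ identifies the linear trace $T(\mathbf{y})=3y_0$, the trace bilinear form $T(\mathbf{x};\mathbf{y})=3x_0y_0+3Q(x;y)$, and
\begin{equation*}
\mathbf{x}^{\#}=\Bigl(x_0^2-\tfrac{1}{2}Q(x),\;-x_0\,x+\tfrac{1}{3\sqrt{2}}\nabla u(x)\Bigr).
\end{equation*}
The scalar component of $(\mathbf{x}^{\#})^{\#}=N(\mathbf{x})\,\mathbf{x}$ unwinds directly using the eiconal equation $Q(\nabla u(x))=9Q(x)^2$ and Euler's identity $Q(\nabla u(x);x)=3u(x)$. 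The vector component, after grouping by powers of $x_0$, splits into two identities on the commutative algebra $(W,\cdot)$ defined by $Q(x\cdot y;z)=3u(x;y;z)$ (so that $x\cdot x=\nabla u(x)$):
\begin{equation*}
\mathrm{(A)}\ \ x\cdot x^2=9\,Q(x)\,x, \qquad \mathrm{(B)}\ \ (x^2)^2=54\,u(x)\,x-9\,Q(x)\,x^2.
\end{equation*}

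The main obstacle is identity (A). It follows from the $t$-coefficient of the polarization of the eiconal equation, namely $Q(\nabla u(x);x\cdot y)=9Q(x)Q(x;y)$, combined with the total symmetry of the trilinear form $(x,y,z)\mapsto Q(x\cdot y;z)=3u(x;y;z)$, which allows one to swap $y\leftrightarrow\nabla u(x)$ to obtain $Q(x\cdot\nabla u(x);y)=9Q(x)Q(x;y)$; nondegeneracy of $Q$ then yields (A). Identity (B) follows by polarizing (A) in the direction $y$, specializing to $y=x^2$, and using $Q(x;x^2)=3u(x)$ together with (A) itself. Springer's theorem then promotes $(V,N,e)$ to a cubic Jordan algebra. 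Finally, $\beta\circ\alpha=\mathrm{id}$ follows because the Cayley--Hamilton expansion $N(re+z)=r^3-\tfrac{3r}{2}\tau(z)+\tfrac{u(z)}{\sqrt{2}}$ on $\F e\oplus e^{\bot}$ reproduces verbatim the formula used in $\beta$; $\alpha\circ\beta=\mathrm{id}$ follows from $T(\mathbf{x})=3x_0$, which forces $e^{\bot}=\{0\}\times W$ and then makes $\tau((0,x))=Q(x)$ and $\sqrt{2}N((0,x))=u(x)$ immediate from the definition of $N$. Functoriality is automatic since both constructions depend only on the intrinsic data $(N,\Trace,e)$ and $(Q,u)$, respectively.
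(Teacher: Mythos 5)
Your proposal is correct, and its $\beta$ direction is essentially the paper's own argument: you construct the same trace form (\ref{TT}) and the same sharp map (\ref{sharp}), and your identities (A) and (B) are precisely the paper's (\ref{HHy}) and (\ref{HHy2}), obtained the same way (differentiate the eiconal equation, use the total symmetry of the trilinear form $Q(h(x;y);z)$ to move the multiplication across $Q$, then polarize and specialize $y=\nabla u(x)$), after which the adjoint identity and Springer's construction finish the job. Where you genuinely diverge is the $\alpha$ direction. The paper first proves a global identity $\tau(\Nabla N(x))=3S(x)^2-18N(x)t$ on all of $V$ (Lemma~\ref{pro:gen}), whose proof leans on the adjoint identity $(x^{\#})^{\#}=N(x)x$ through $T(x^\#;x^\#)=\Trace(x^\#)^2-2S(x^\#)$ and $S(x^\#)=\Trace(x^{\#\#})$, and only then restricts to $t=0$. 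You instead work directly on $e^\bot$: the Cayley--Hamilton identity $x^3=\tfrac12\Trace(x^2)x+N(x)e$ gives $x^\#=x^2-\tfrac12\Trace(x^2)e$, the eiconal equation reduces to $\Trace(x^4)=\tfrac12\Trace(x^2)^2$, and that trace identity falls out by multiplying the cubic relation by $x$ and tracing. Your route is more elementary (it needs only power-associativity, the cubic identity and $T(x;y)=\Trace(x\bullet y)$, not the adjoint identity), but it yields less: the paper's lemma also delivers the $\infty$-Laplacian eigenfunction equation of independent interest. One small caveat: your convention $Q(x\cdot y;z)=3u(x;y;z)$ gives $x\cdot x=\nabla u(x)$ only if $u(x;y;z)$ is normalized so that $u(x;x;x)=u(x)$; with the paper's convention $u(x;x;x)=6u(x)$ the correct factor is $\tfrac12$, not $3$ --- this does not affect the validity of (A) and (B). Finally, your $\beta\alpha=\mathrm{id}$ argument via $N(re+z)=r^3-\tfrac{3r}{2}\tau(z)+u(z)/\sqrt2$ and your $\alpha\beta=\mathrm{id}$ argument match the paper's Proposition~\ref{cor:1}, though the functoriality claims you dismiss as ``automatic'' are spelled out in the paper via the uniqueness of the minimum polynomial under isomorphism.
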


We give the basic information on  Jordan algebras in section~\ref{sec:Jordan} and then prove Theorem~\ref{Theorem} in sections~\ref{sec:Aux} and ~\ref{sec:MainIso}.

\section{Basic facts on Jordan algebras}
\label{sec:Jordan}

Here we review some definitions in Jordan algebra theory following the classical books \cite{JacobsonBook}, \cite{McCrbook} and \cite{FKbook}. A vector space\footnote{We  always assume that $V$ is finite dimensional. } $V$ over a field $\F$ with a commutative $\F$-bilinear map $\bullet:V\times V\to V$  is called a Jordan algebra if it satisfies
\begin{equation}\label{Jordan_quadrat}
x^{2}\bullet (x\bullet y) = x\bullet (x^{ 2}\bullet y),
%\quad\mbox{ for all } x,y \in J,
\end{equation}
where $x^{2}\doteq x\bullet x$. Any Jordan algebra is power associative, i.e. the  subalgebra $V(x)$ generated by a single element $x$ is associative for any $x\in V$.
The rank of $V$ is $r\doteq \max \{\dim V(x): x\in V\}$. An element $x$ is said to be \textit{regular} if $\dim V(x)=r$. For a regular $x\in V$, the elements $e,x,\ldots,x^{r-1}$ are linearly independent, so that
$$
x^r=\sigma_1(x)x^{r-1}-\sigma_2(x)x^{r-2}+\ldots+(-1)^{r-1} \sigma_{r}(x)e,
$$
where $\sigma_i(x)\in \F$ are uniquely defined by $x$. The polynomial
$$
m_x(\lambda)=\lambda^r-\sigma_1(x)\lambda^{r-1}+\ldots+(-1)^r \sigma_{r}(x)\in \F[\lambda]
$$
is called the \textit{minimum polynomial} of the regular element $x\in V$ \cite{JacobOsaka}. The coefficients $\sigma_i(x)$   are polynomial functions of $x$ homogeneous of degree $i$ in the following sense. If $\{e_i\}_{1\le i\le n}$ is an arbitrary  basis of $V$ and
$$
x_\xi\doteq \sum_{i=1}^n \xi_ie_i, \qquad \xi\in \F^n,
$$
is a generic element of the algebra $V$ then
$$
x_\xi^r-\sigma_1(\xi)x_\xi^{r-1}+\ldots+(-1)^r \sigma_{r}(\xi)e=0,
$$
where $\sigma(\xi)$ is a polynomial function of $x$ homogeneous of degree $i$.

\begin{definition*}
The coefficient $\sigma_1(x)=\Trace x $ is called the \textit{generic trace} of $x$, and $\sigma_n(x)=N(x)$ is called the \textit{generic norm} of $x$.
\end{definition*}

\begin{example}
A standard example of a Jordan algebra is the vector space $M_n(\F_d)$ of matrices of size $n\times n$ with entries in  $\F_1=\R{}$  or $\F_2=\mathbb{C}$, and the bullet product
\begin{equation}
\label{jordanproduct}
x\bullet y\doteq \frac{1}{2}(xy+yx).
\end{equation}
In this case, $\Trace x=\trace x$, $N(x)=\det x$  are the usual  trace and determinant of $x$. In fact, it is readily verified that the real vector space $\mathrm{Herm}_3(\mathbb{F}_d)$ of self-adjoint matrices over the real division algebra $\F_d$ with $d=1,2,4$ and arbitrary $n\ge1$, and with $d=8$ and $n\le 3$ is also a Jordan algebra with respect to the multiplication (\ref{jordanproduct}).

\end{example}

\begin{example}\label{ex:spin}
The following well-known construction associates a Jordan algebra structure  to an arbitrary  quadratic space $(V,Q)$ over  $\F$ with a basepoint $e$, i.e. $Q(e)=1$, by virtue of the following  multiplication
\begin{equation}\label{xbullety}
x\bullet y=Q(x;e)y+Q(y;e)x-Q(x;y)e.
\end{equation}
It is readily that (\ref{xbullety}) satisfies the Jordan identity (\ref{Jordan_quadrat}) with the unit element $e$. Thus obtained  Jordan algebra $\mathrm{Spin}(V,Q,e)$ is called a \textit{spin factor} (or a quadratic factor). It follows from (\ref{xbullety}) that any element $x\in V$ satisfies the quadratic identity
$$
x^2-2Q(x,e)x+Q(x)e=0
$$
which, in particular, shows that $\Trace (x)=2Q(x,e)$ and $N(x)=Q(x)$. Conversely, any rank 2 Jordan algebra is obtained by virtue of the above construction from an appropriate quadratic space. In any case, the multiplicative structure on a quadratic Jordan algebra is completely determined by its generic norm.
\end{example}

In the case of Jordan algebra of rank 3 one  still has a possibility to restore the multiplicative structure from the generic norm, but in contrast to the quadratic case only very special cubic forms can be used to construct cubic Jordan algebras. The corresponding procedure is called the Springer construction \cite{Springer59} and we recall below following  \cite[II.4.2]{McCrbook}. Let  $V$ be  a vector space over a field $\F$ with a cubic form $u:V\to V$. Notice that the set  $u^{-1}(1)=\{x\in V: u(e)=1\}$ is nonempty because  $u$ is an odd function. A vector $e\in u^{-1}(1)$ is called a basepoint. Consider the decomposition
\begin{equation}\label{equation}
u(x+ty)=u(x)+tu(x;y)+t^2u(y;x)+t^3u(y), \qquad x,y\in V,\,\,t\in \F,
\end{equation}
where $u(x;y)=\partial_y u|_{x}$
is the directional derivative of $u$ in the direction $y$ evaluated at $x$.
Since  $u(x;y)$  is a quadratic form with respect to the first variable, one can polarize it to obtain
the full linearization of the cubic form $u$:
\begin{equation}\label{full}
u(x;y;z)=u(x+y;z)-u(x;z)-u(y;z)= \partial_x\partial_y\partial_z u|_{e}.
\end{equation}
Then Euler's homogeneity function theorem implies
\begin{equation}\label{u40}
u(x)=\frac{1}{3}u(x;x)=\frac{1}{6}u(x;x;x).
\end{equation}

Now we fix some cubic form $N(x)\not\equiv 0$ on $V$ and its basepoint $e\in N^{-1}(1)$. Linearizing $N$ as above, one defines the \textit{linear trace} form and and the \textit{quadratic spur} form to be
\begin{equation}\label{traceform}
\Trace(x)=N(e;x)=N(e;e;x)
\end{equation}
and
\begin{equation}\label{SPUR}
S(x)=N(x;e)=N(x;x;e),
\end{equation}
respectively. Then
\begin{equation}\label{TC3}
\Trace(e)=S(e)=3N(e)=3.
\end{equation}
 Polarizing the quadratic spur form,
$$
S(x;y)=N(x;y;e)\equiv S(x+y)-S(x)-S(y)
$$
we define the bilinear \textit{trace form} by
\begin{equation}\label{tracec}
T(x;y)=\Trace(x)\Trace(y)-S(x;y)\equiv -\partial_x\partial_y \log N_{ce}
\end{equation}

\begin{definition}\label{def:1}
Let $V$ be a finite-dimensional vector space over $\F$ and let $N:V\to \R{}$ be a  cubic form with basepoint $e$. $N$ is said to be a \textit{Jordan cubic form},  if the following is satisfied:
\begin{itemize}
  \item[$(\mathrm{i})$]
  the trace bilinear form $T(x;y)$ is a nondegenerate bilinear form;
  \item[$(\mathrm{ii})$]
  the quadratic sharp map $\#:V\to V$, defined (uniquely) by
  \begin{equation}\label{sharpdef}
  T(x^\#;y)=N(x;y),
  \end{equation}
    satisfies the adjoint identity
  \begin{equation}\label{adjoint}
(x^{\#})^\#=N(x)x.
\end{equation}
\end{itemize}
\end{definition}

\begin{example}
We demonstrate the above definitions by the Jordan algebra of symmetric $3\times 3$ matrices  $V=\mathrm{Herm}_3(\R{})$ with the unit matrix being a basepoint: $e=\mathbf{1}$ and a cubic form $N(x)=\det x$. Then
$$
\Trace x=\partial_x N|_{e}=(\det(\mathbf{1}+tx))'_{t=0} =\trace x,
$$
and, by virtue of the Newton identities,
$$
S(x)=\partial_e N|_{x}=(\det(x+t\mathbf{1}))'_{t=0}=\frac{1}{2}((\trace x)^2-\trace x^2).
$$
Polarizing the latter identity  yields $S(x;y)=\trace x\trace y-\trace x\bullet y$. Thus,
$$
T(x;y)=\trace x\bullet y\equiv \trace xy.
$$
Now observe that $T(x;y)=\trace xy$ obviously is a nondegenerate bilinear form, hence we get (i) in Definition~\ref{def:1}.
On the other hand, if $x$ is an invertible matrix then
$$
3N(x;y)=\partial N_y|_{x}=(\det(x+ty))'_{t=0}=(\det x\,\det(\mathbf{1}+tyx^{-1}))'_{t=0}=\det x\,\trace( y x^{-1}),
$$
which yields by (\ref{sharpdef}) the adjoint map
$x^\#=x^{-1}\det x$
is the adjoint matrix in the usual sense of linear algebra, thus $x^{\#\#}=x\det x$ and the adjoint identity (\ref{adjoint}) follows.
\end{example}

\begin{proposition}[The Springer construction, \cite{McCrbook}, p.~77]\label{pro:springer}
To any Jordan cubic form $N:V\to \F$ with basepoint $e\in V$ one can associate a unital Jordan algebra $\Jord(V,N,e)$ of rank $3$ with the unit $e$ and the Jordan multiplication determined by the formula
\begin{equation}\label{XYtimes}
x\bullet y=\frac{1}{2}(x\# y+\Trace(x)y+\Trace(y)x-S(x;y)e),
\end{equation}
where $x\# y=(x+y)^\#-x^\#-y^\#$. All elements of $\Jord(V,N,e)$ satisfy the cubic identity
\begin{equation}\label{thecubic}
x^{3}-\Trace(x)x^{2}+S(x)x-N(x)e=0,
\end{equation}
and the sharp identity
\begin{equation}\label{sharpidentity}
x^\#=x^{ 2}-\Trace(x)x+S(x)e.
\end{equation}
%An element $x$ is invertible iff $N(x)\ne0$, and in this case $x^{-1}=N(x)^{-1}x$.
\end{proposition}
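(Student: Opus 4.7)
The construction requires four verifications: commutativity of $\bullet$, the unit property of $e$, the sharp identity (\ref{sharpidentity}), the cubic identity (\ref{thecubic}), and the Jordan axiom (\ref{Jordan_quadrat}). Commutativity is automatic from the symmetry of $x\#y$, $\Trace$, and $S(x;y)$. The sharp identity is an immediate consequence of setting $y=x$ in (\ref{XYtimes}): since $(2x)^\# = 4x^\#$ forces $x\#x = 2x^\#$ and $S(x;x) = 2S(x)$ by polarization, one reads off $x^2 = x^\# + \Trace(x)x - S(x)e$ directly.

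For the unit property, my plan is to first prove $e^\# = e$ using nondegeneracy of $T$. Indeed, $T(e^\#;y) = N(e;y) = \Trace(y)$ by (\ref{sharpdef}) and (\ref{traceform}), while $T(e;y) = \Trace(e)\Trace(y) - S(e;y) = 3\Trace(y) - 2\Trace(y) = \Trace(y)$, where the identity $S(e;y) = 2\Trace(y)$ is obtained by polarizing $S$ at $e$ via the full linearization (\ref{full}). The auxiliary bilinear sharp $x\#e$ is pinned down similarly: $T(x\#e;y) = N(x;e;y) = S(x;y) = T(\Trace(x)e - x;y)$, so $x\#e = \Trace(x)e - x$. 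Substituting this back into (\ref{XYtimes}) at $y=e$, together with $S(x;e) = 2\Trace(x)$, confirms $e\bullet x = x$.

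Multiplying the sharp identity by $x$ yields $x\bullet x^\# = x^3 - \Trace(x)x^2 + S(x)x$, so the cubic identity (\ref{thecubic}) is equivalent to $x\bullet x^\# = N(x)e$. Expanding $x \bullet x^\#$ via (\ref{XYtimes}) gives $\tfrac{1}{2}(x\#x^\# + \Trace(x)x^\# + \Trace(x^\#)x - S(x;x^\#)e)$. The trace coefficients follow from the previous step: $\Trace(x^\#) = T(e;x^\#) = N(x;e) = S(x)$ and $T(x;x^\#) = N(x;x) = 3N(x)$ by Euler's relation (\ref{u40}), whence $S(x;x^\#) = \Trace(x)S(x) - 3N(x)$. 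The last piece $x\#x^\#$ is evaluated by polarizing the adjoint identity $(x^\#)^\# = N(x)x$ once in $x$, producing the relation $x^\#\#(x\#y) = N(x;y)x + N(x)y$ from which $x\#x^\#$ is read off; after cancellation everything except $N(x)e$ disappears.

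The main obstacle is the Jordan axiom $x^2\bullet(x\bullet y) = x\bullet(x^2\bullet y)$. My strategy is to first establish that $T$ is associative with respect to $\bullet$: $T(x\bullet y;z) = T(x;y\bullet z)$ for all $x,y,z$. This reduces via (\ref{XYtimes}) and (\ref{tracec}) to the total symmetry of the full trilinearization of $N$ together with the identity $T(x\#y;z) = N(x;y;z)$. By nondegeneracy of $T$, the Jordan axiom is then equivalent to a scalar identity tested against every $z$; substituting the sharp identity (\ref{sharpidentity}) to eliminate all occurrences of $x^2$ reduces the problem to a trilinear identity in $x^\#$, $\#$, $N$, $\Trace$, and $S$, which is precisely the full trilinear polarization of the adjoint identity $(x^\#)^\# = N(x)x$. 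The heaviest book-keeping is organizing the cascade of trace relations (for example $\Trace(x^\#) = S(x)$ and $S(x^\#) = N(x)\Trace(x)$ obtained by a similar pairing) so that every trilinear term finds its counterpart; the classical reference \cite[II.4.2]{McCrbook} carries this out in detail. Rank $3$ then follows from (\ref{thecubic}), which yields $\dim V(x) \le 3$, while the existence of a regular element with three-dimensional $V(x)$ is a generic consequence of $N$ being a nondegenerate cubic form.
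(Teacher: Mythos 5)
The paper gives no proof of this proposition at all --- it is quoted from \cite[p.~77]{McCrbook} as a known result --- so there is no in-paper argument to compare yours against; what follows judges the proposal on its own. The elementary verifications in your outline are correct and essentially complete: commutativity; the sharp identity from $x\# x=2x^\#$ and $S(x;x)=2S(x)$; the identities $e^\#=e$ and $x\# e=\Trace(x)e-x$ obtained by testing against the nondegenerate form $T$; the unit property; and the reduction of the cubic identity to $x\bullet x^\#=N(x)e$. For that last step you should say explicitly that the substitution isolating $x\# x^\#$ from the linearized adjoint identity $x^\#\,\#\,(x\# y)=N(x;y)x+N(x)y$ is $y=e$ (combined with $x\# e=\Trace(x)e-x$ and $e\# x^\#=S(x)e-x^\#$); setting $y=x$ only reproduces $(x^\#)^\#=N(x)x$ and gives nothing new.

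The one genuinely incomplete point is the Jordan axiom (\ref{Jordan_quadrat}), which is the real content of the proposition. Your first move --- establish $T(x\bullet y;z)=T(x;y\bullet z)$ from the total symmetry of $N(x;y;z)$ and $T(x\# y;z)=N(x;y;z)$, then use nondegeneracy of $T$ to turn the axiom into a scalar identity --- is the right one, but the residual identity is not ``precisely the full trilinear polarization of the adjoint identity'': one must assemble the adjoint identity with several of its consequences (the linearization above, $S(x^\#)=\Trace(x)N(x)$, the basepoint identities) in a nontrivial order, and McCrimmon in fact verifies the quadratic axioms for the operator $U_xy=T(x;y)x-x^\#\,\#\,y$ rather than attacking the linear identity directly. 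Deferring this to \cite{McCrbook} is defensible --- the paper defers the entire proposition --- but the sentence as written overstates how mechanical the step is. A smaller point: the existence of a regular element with $\dim V(x)=3$ is not a ``generic consequence'' of nondegeneracy; for instance $V=\F$, $N(x)=x^3$, $e=1$ is a Jordan cubic form whose associated algebra is $\F$ itself, of rank $1$, so ``rank $3$'' in the statement must be read as ``generic minimal polynomial of degree at most $3$.''
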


We mention some useful consequences of the above construction. By (\ref{sharpdef})
$ T(x^\#;e)=N(x;e)=S(x)$. Polarizing  this identity one obtains
$
T(x\# y;e)=S(x;y).
$
On the other hand, (\ref{tracec}) and (\ref{TC3}) yield
\begin{equation*}\label{TraceandT}
T(x;e)=\Trace(x)-S(x;e)=\Trace(x)-N(x;e;e)=\Trace(x),
\end{equation*}
hence
\begin{equation}\label{seting}
\Trace(x\# y)=T(x\# y;e)=S(x;y).
\end{equation}
Applying the trace to (\ref{sharpidentity}) and using the last identity
one readily finds  that
\begin{equation}\label{bullet}
\begin{split}
S(x;y)=\Trace(x)\Trace(y)-\Trace(x\bullet y),
\end{split}
\end{equation}
which in its turn yields
\begin{equation}\label{scalproduct}
\Trace(x\bullet y)=T(x;y).
\end{equation}
Setting $x=y$ in (\ref{seting}) yields
\begin{equation}\label{spuridentity}
\Trace(x^\#)=S(x).%, \qquad \Trace(x\# y)=S(x,y).
\end{equation}

\section{An auxiliary lemma}\label{sec:Aux}

Here we establish some auxiliary formulas for the generic norm which also have an independent interest. Let $N:V\to \F$  be a Jordan cubic form with basepoint $e$ and let $J=J(V,N,e)$ be the associated  cubic Jordan algebra. Notice that the bilinear trace form $T(x;y)$, and thus the inner product
\begin{equation}\label{tau}
\tau(x;y)\doteq \frac{1}{3}T(x;y)
\end{equation}
and the associated quadratic norm $\tau(x)=\tau(x,x)$ are all nondegenerate on $V$. The normalized factor is chosen to ensure that the basepoint $e$ becomes  a unit vector in the induced inner structure. Let us denote by $\Nabla$ the covariant derivative on $(V,T)$. Then $\partial_y N|_{x}=\tau(\Nabla N(x);y)$ for any $y\in V$, hence the sharp map definition (\ref{sharpdef})  yields
$$
\tau(x^\#;y)=\frac{1}{3}N(x;y)=\frac{1}{3}\partial_y N|_{x}=\frac{1}{3}\tau(\Nabla N(x);y),
$$
therefore by the nondegenerateity of $T$
\begin{equation}\label{nablaN}
\Nabla N(x)=3x^\#.
\end{equation}
Let us also denote
\begin{equation}\label{tdef}
t\doteq \tau(x;e)=\frac{1}{3}\Trace (x).
\end{equation}
It is natural to think of $t$ as a `time' coordinate function on $V$. Then the orthogonal complement to the time-line $\F e$ coincides with the subspace of trace free elements of $J$,
$$
\cbot\doteq \{x\in J:\, \tau(x;e)=0\}= \{x\in J:\, \Trace (x)=0\},
$$
which can also be thought as the spatial subspace. This yields  the orthogonal decomposition $V=\F e\oplus \cbot$ and the corresponding decomposition of the covariant derivative into the time- and the spatial components,
\begin{equation}\label{nablaorthogonal}
\Nabla= \partial_t\oplus\nabla,
\end{equation}
where $\nabla$ denotes the covariant derivative induced by $\Nabla$ on $\cbot$. %More explicitly, given a smooth function $u:\cbot\to \F$, the gradient $\nabla u(x)$ coincides with the orthogonal projection of $\Nabla \widetilde {u}(x)$ onto $\cbot$, where  $\widetilde {u}$ is an arbitrary smooth extension of $u$ onto a neighborhood of $x$ in $J$. It is well known that the choice of the extension does not affect the resulting projection.

%We have the following time-evolution equation for the generic norm.

\begin{lemma}\label{pro:gen}
In the made notation,
\begin{equation}\label{u0}
\begin{split}
\partial_t N(x)&=S(x),
\end{split}
\end{equation}
\begin{equation}\label{nablaN3}
\tau(\Nabla N(x))=3(\partial_t N(x))^2-18tN(x),
\end{equation}
\begin{equation}\label{nablaInf}
\tau(\Nabla N(x);\Nabla \tau(\Nabla N(x))=108\,\tau(x)N(x).
\end{equation}

\end{lemma}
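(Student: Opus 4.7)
The proof decomposes into three computations, all driven by the identity $\Nabla N(x)=3x^\#$ from (\ref{nablaN}) together with the adjoint identity (\ref{adjoint}). Formula (\ref{u0}) is essentially a direct reading: by the orthogonal decomposition (\ref{nablaorthogonal}), $\partial_t N(x)=\tau(\Nabla N(x);e)=T(x^\#;e)$, and the sharp definition (\ref{sharpdef}) with $y=e$ rewrites this as $N(x;e)=S(x)$.

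For (\ref{nablaN3}) I would compute $\tau(\Nabla N(x))=9\tau(x^\#)=3T(x^\#;x^\#)$ and then expand via (\ref{tracec}) and (\ref{spuridentity}) to $T(x^\#;x^\#)=S(x)^2-S(x^\#;x^\#)$. The substantive step is $S(x^\#;x^\#)$: by (\ref{seting}) it equals $\Trace(x^\#\# x^\#)$, and the polarization identity $y\# y=2y^\#$ combined with the adjoint identity gives $x^\#\# x^\#=2(x^\#)^\#=2N(x)x$, hence $S(x^\#;x^\#)=2N(x)\Trace(x)=6tN(x)$ by (\ref{tdef}). Inserting this and (\ref{u0}) yields (\ref{nablaN3}).

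The main obstacle is (\ref{nablaInf}), which requires differentiating the quartic $G(x):=\tau(\Nabla N(x))$. By the chain rule and the $\tau$-self-adjointness of the Hessian, $\Nabla G(x)=2\Hess N(x)\cdot \Nabla N(x)$; polarizing $\Nabla N=3x^\#$ via $(x+ty)^\#=x^\#+t(x\# y)+t^2 y^\#$ yields $\Hess N(x)y=3(x\# y)$, and so $\Nabla G(x)=18(x\# x^\#)$. Consequently, $\tau(\Nabla N(x);\Nabla G(x))=18\,T(x^\#;x\# x^\#)$. To evaluate this, linearize (\ref{sharpdef}) in $x$ to obtain $T(x\# z;y)=N(x;y;z)$; by (\ref{full}) the right-hand side is totally symmetric in $(x,y,z)$, hence so is the form $T(a\# b;c)$. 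This symmetry, together with the symmetry of $T(\cdot;\cdot)$, rearranges $T(x^\#;x\# x^\#)=T(x\# x^\#;x^\#)=T(x^\#\# x^\#;x)=2T((x^\#)^\#;x)=2N(x)T(x;x)=6\tau(x)N(x)$, invoking the adjoint identity once more and the normalization (\ref{tau}). Multiplication by $18$ delivers $108\,\tau(x)N(x)$.

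In brief, the lemma reduces to two moves applied in varying arrangements: polarization of the sharp map, and the substitution $(x^\#)^\#\mapsto N(x)x$. The only delicate bookkeeping step is $T(x^\#;x\# x^\#)=T(x^\#\# x^\#;x)$, which requires correctly identifying and exploiting the totally symmetric trilinear form $T(a\# b;c)=N(a;b;c)$ concealed in the definition of the sharp map; once that symmetry is available, the rest is linear algebra.
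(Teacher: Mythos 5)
Your proof is correct. The derivations of (\ref{u0}) and (\ref{nablaN3}) coincide with the paper's up to cosmetic choices (the paper evaluates $S(x^\#;x^\#)$ as $2S(x^\#)=2\Trace(x^{\#\#})$ via (\ref{spuridentity}) rather than through (\ref{seting}) and $x^\#\# x^\#=2(x^\#)^\#$, but this is the same computation). For (\ref{nablaInf}) you take a genuinely different route: you compute the gradient of $G(x)=\tau(\Nabla N(x))$ intrinsically as $\Nabla G=2\Hess N(x)\,\Nabla N(x)=18\,(x\# x^\#)$ and then close the argument with the total symmetry of the trilinear form $T(a\# b;c)=N(a;b;c)$ plus one application of the adjoint identity, which collapses $T(x^\#;x\# x^\#)$ to $2N(x)T(x;x)$ in a single rearrangement. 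The paper instead first rewrites $G=3S(x)^2-6N(x)\Trace(x)$ using (\ref{nablaN3}), identifies $\frac13\tau(\Nabla N;\Nabla G)$ with the directional derivative $\partial_{x^\#}G$, and then eliminates $S(x;x^\#)$ and $T(x^\#;x^\#)$ through the identities $x\bullet x^\#=N(x)e$ and (\ref{bullet}); this avoids introducing the Hessian and the symmetric trilinear form but costs a longer chain of substitutions. Your version is arguably cleaner and makes visible the structural reason the $\infty$-Laplacian identity holds (namely the symmetry of $N(a;b;c)$ together with $(x^\#)^\#=N(x)x$); the paper's version stays entirely within the scalar invariants $\Trace$, $S$, $N$. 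Both arguments are complete and rely only on facts established in Section~\ref{sec:Jordan}.
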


\begin{proof}%[Proof of Proposition~\ref{pro:gen}]
The first relation follows from the definition of $t$ and (\ref{spuridentity}):
\begin{equation}\label{u1}
\begin{split}
\partial_t N(x)&\equiv \tau(\Nabla N(x);e)=\Trace(x^\#)=S(x).
\end{split}
\end{equation}
Next, applying (\ref{tracec}), (\ref{spuridentity}) and  (\ref{adjoint}), we find
\begin{equation}\label{u2}
\begin{split}
\tau(\Nabla N(x))&\equiv 3T(x^\#,x^\#)\\
&=3\Trace(x^\#)^2-3S(x^\#;x^\#)\\
&=3\Trace(x^\#)^2-6S(x^\#)\\
&=3S(x)^2-6\Trace(x^{\#\#})\\
&=3S(x)^2-18N(x)t,
\end{split}
\end{equation}
hence (\ref{u0}) yields
\begin{equation}\label{rew}
\tau(\Nabla N(x))=3(\partial_t N(x))^2-18N(x)t,
\end{equation}
thus  (\ref{nablaN3}) follows.

Finally, in order to show (\ref{nablaInf}), we rewrite (\ref{rew}) by virtue (\ref{u0}) and (\ref{tdef}) as
$$
\tau(\Nabla N(x))=3S(x)^2-6N(x)\Trace(x),
$$
which yields by virtue of (\ref{spuridentity})
\begin{equation}\label{u4}
\begin{split}
\frac{1}{3}\tau(\Nabla N(x);\Nabla \tau(\nabla N(x))&\equiv \partial_{x^{\#}}\tau(\nabla N(x))\\
&=6S(x;x^\#)S(x)-6N(x)\Trace(x^\#)-6N(x;x^\#)\Trace(x)\\
%&=6S(x)(S(x;x^\#)-N(x))-6N(x;x^\#)\Trace(x)\\
&=6(S(x;x^\#)-N(x))\Trace x^\#-6T(x^\#;x^\#)\Trace(x)\\
\end{split}
\end{equation}
By (\ref{sharpidentity}), $x\bullet x^\#=N(x)e$, hence using (\ref{bullet}), we find
$$
S(x;x^\#)=\Trace(x)\Trace(x^\#)-\Trace(x\bullet x^\#)=\Trace(x)\Trace(x^\#)-3N(x).
$$
On the other hand, by (\ref{u2})
$$
T(x^\#;x^\#)=S(x)^2-2N(x)\Trace(x)=(\Trace(x^\#))^2-2N(x)\Trace(x),
$$
and substituting the found identities into (\ref{u4}) and using (\ref{tracec}) yields the required relation:
\begin{equation}\label{u5}
\begin{split}
\frac{1}{3}\tau(\Nabla N(x);\Nabla \tau(\nabla N(x))&=12\,(\Trace(x)^2-2\Trace(x^\#))N(x)\\
&=12\,(\Trace(x)^2-S(x,x))N(x)\\
&=12\,T(x,x)N(x)\equiv 36\,\tau(x)N(x).
\end{split}
\end{equation}
\end{proof}

\begin{remark}
We demonstrate the above equations for a formally real Jordan algebra $J$. Observe that in this case $J$ is Euclidean, i.e. the bilinear form $\tau$ is positive definite, see for instance \cite[p.~46, p.~154]{FKbook}, hence one can choose local coordinates in $V$ such that $\tau(x)=|x|^2$ and $t=x_n$. Since $2S(x)=(\Trace x)^2-\Trace x^2=9x_n^2-3|x|^2$, the equation  (\ref{u0}) and (\ref{nablaN3}) yield respectively
\begin{equation*}
\begin{split}
\partial_{x_n} N(x)&=\frac{3}{2}(3x_n^2-| x|^2),\\
|\Nabla N(x)|^2&=3(\partial_{x_n} N(x))^2-18x_nN(x),\\
%\Delta_{\infty}N(x)&=108 |x|^2 N(x),\\
\end{split}
\end{equation*}
while (\ref{nablaInf}) shows that $N(x)$ satisfies an eigenfunction type equation
$$
\Delta_{\infty}N(x)=108 |x|^2 N(x)
$$
for the $\infty$-Laplace operator $\Delta_{\infty}u=\frac{1}{2}\nabla u\cdot \nabla|\nabla u|^2$.
\end{remark}

\bigskip
\section{The proof of Theorem~\ref{Theorem}}
\label{sec:MainIso}

The proof of the theorem follows from Propositions~\ref{cor:from}, Proposition~\ref{pro:th:1} and Proposition~\ref{cor:1} below.

\begin{proposition}\label{cor:from}
Let  $J=J(V,N,e)$ be a cubic Jordan algebra, $\cbot=\{x\in V:\Trace x=0\}$, $u(x)=\sqrt{2}N(x)|_{e^\bot}$  and the inner product $\tau$ is defined by $(\ref{tau})$.
Then $(\cbot,\tau, u)$  is an eiconal triple. Moreover, isomorphic Jordan algebras give rise to equivalent eiconal triples.

\end{proposition}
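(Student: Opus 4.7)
The plan is to reduce the eiconal identity on $e^\bot$ to formula (\ref{nablaN3}) of Lemma~\ref{pro:gen}, specialized at $t=0$, and then verify by a short calculation that the two sides match after the $\sqrt{2}$ rescaling. First I would unpack the relationship between the intrinsic covariant derivative $\nabla u$ on the quadratic subspace $(e^\bot,\tau)$ and the ambient gradient $\Nabla N$ on $(V,\tau)$. Using the orthogonal splitting $V=\F e\oplus e^\bot$ and (\ref{nablaorthogonal}), for $x\in e^\bot$ the directional derivative $\partial_y u|_x=\sqrt{2}\,\tau(\Nabla N(x);y)$ depends only on the $e^\bot$-component of $\Nabla N(x)$ when $y\in e^\bot$, so that $\nabla u(x)=\sqrt{2}\,\nabla N(x)$ and therefore $\tau(\nabla u(x))=2\tau(\nabla N(x))$. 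The same splitting gives
\begin{equation*}
\tau(\Nabla N(x))=(\partial_t N(x))^2+\tau(\nabla N(x)).
\end{equation*}

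Next I would apply Lemma~\ref{pro:gen}. For $x\in e^\bot$ we have $t=\tfrac{1}{3}\Trace(x)=0$, so (\ref{nablaN3}) collapses to $\tau(\Nabla N(x))=3(\partial_t N(x))^2$. Combining this with the splitting identity yields $\tau(\nabla N(x))=2(\partial_t N(x))^2$. By (\ref{u0}), $\partial_t N(x)=S(x)$, and by (\ref{bullet}) together with (\ref{scalproduct}) and $\Trace(x)=0$,
\begin{equation*}
2S(x)=(\Trace x)^2-\Trace(x\bullet x)=-3\tau(x),
\end{equation*}
so $(\partial_t N(x))^2=\tfrac{9}{4}\tau(x)^2$. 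Plugging in,
\begin{equation*}
\tau(\nabla u(x))=2\tau(\nabla N(x))=4(\partial_t N(x))^2=9\tau(x)^2,
\end{equation*}
which is precisely the eiconal equation (\ref{equivariant}).

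For the second statement, I would use the fact that any Jordan algebra isomorphism $\phi\colon J\to J'$ between cubic Jordan algebras preserves all invariants that are determined algebraically: it sends the unit $e$ to $e'$, preserves the cubic identity (\ref{thecubic}), and hence commutes with the trace form, the generic norm, and the trace bilinear form. Consequently $\phi$ restricts to a linear isomorphism $e^\bot\to (e')^\bot$ which is an isometry for $\tau$ (because $\tau=\tfrac{1}{3}\Trace(x\bullet y)$ is preserved) and satisfies $N'\circ\phi=N$; multiplying by $\sqrt{2}$, the restrictions $u$ and $u'$ intertwine through $\phi$, which is exactly the equivalence of eiconal triples.

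The only real point requiring care is step one, i.e.\ the orthogonal-decomposition bookkeeping that lets us pass from the ambient identity (\ref{nablaN3}) to an identity purely on $e^\bot$; everything else is bookkeeping with the trace, spur and sharp identities already collected in Section~\ref{sec:Jordan}. There is no genuine obstacle beyond verifying that the $e$-component of $\Nabla N(x)$ contributes exactly the term $(\partial_t N(x))^2=\tfrac{9}{4}\tau(x)^2$ that, together with the $\sqrt{2}$ normalization of $u$, produces the constant $9$ on the right-hand side of (\ref{equivariant}).
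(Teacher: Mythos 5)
Your proof is correct and follows essentially the same route as the paper: specialize (\ref{nablaN3}) at $t=0$, use the orthogonal splitting $\tau(\Nabla N(x))=(\partial_t N(x))^2+\tau(\nabla N(x))$, and the identity $S(x)=-\tfrac{3}{2}\tau(x)$ on $e^\bot$ to get $\tau(\nabla u(x))=9\tau(x)^2$, with the equivalence statement handled by the induced isometry $\phi:e_1^\bot\to e_2^\bot$. The only place the paper is more explicit is in justifying that an isomorphism preserves $\Trace$, $S$ and $N$ (it derives this from the uniqueness of the minimum polynomial), whereas you assert it as a known invariance fact; this is a presentational difference rather than a gap.
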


\begin{proof}
Notice that by (\ref{nablaorthogonal}), $\tau(\Nabla N(x))=\tau(\nabla N(x))+(\partial_t N(x))^2$, hence (\ref{nablaN3}) yields
\begin{equation}\label{setting}
 \tau(\nabla N(x))=2(\partial_t N(x))^2|_{t=0}, \qquad x\in e^\bot.
\end{equation}
On the other hand, by (\ref{tracec}), $S(x)=-\frac{1}{2}T(x,x)=-\frac{3}{2}\tau(x)$ for any $x\in e^\bot$, thus $\partial_t N(x)|_{t=0}=-\frac{3}{2}\tau(x)$ by (\ref{u0}) and setting the found relation into (\ref{setting}) yields
\begin{equation}\label{nablaN3sekv}
\tau(\nabla N(x))=\frac{9}{2}\tau(x)^2
\end{equation}
which implies that $(e^\bot, \tau, \sqrt{2}N|_{e\bot})$ is an eiconal triple, as required.

Now we suppose that $J_1$ and $J_2$ are isomorphic Jordan algebras and denote by $\phi:J_1\to J_2$ the corresponding isomorphism. Observe that any element $\xi\in J_2$  satisfies  the characteristic relation
$$
\xi^{3}-\Trace_2(\xi)\xi^{2}+S_2(\xi)\xi-N_2(\xi)e_2=0,
$$
where $\Trace_k$, $S_k$, $N_k$ and $e_k$ denote the generic trace, the generic spur, the generic norm and the unit element in the algebra  $J_k$, $k=1,2$. Applying $\phi^{-1}$ to the latter relation we find by isomorphy of $\phi$ that
$$
(\phi^{-1}(\xi))^{3}-\Trace_2(\xi)(\phi^{-1}(\xi))^{2}+S_2(\xi)\phi^{-1}(\xi)-N_2(\xi)e_1=0.
$$
Setting $x=\phi^{-1}(\xi)$ in the latter relation, we find that any element $x\in J_1$ satisfies the cubic relation
$$
x^{3}-\Trace_2(\phi(x))x^{2}+S_2(\phi(x))x-N_2(\phi(x))e_1=0.
$$
Since the minimum polynomial is unique, see, for instance \cite{JacobOsaka}, the latter relation implies $\Trace_1(x)=\Trace_2(\phi(x))$, $S_1(x)=S_2(\phi(x))$ and
\begin{equation}\label{NN}
N_1(x)=N_2(\phi(x))
\end{equation}
for any $x\in J_1$. Using (\ref{tracec}) one finds $T_1(x;y)=T_1(\phi(x);\phi(y)),$
which  yields $\tau_1(x;y)=\tau_1(\phi(x);\phi(y))$. Since $\phi(e_1)=e_2$ and $\phi$ is an isomorphism on the level of vector space, one has $\phi(e_1^\bot)=e_2^\bot$, hence $\phi: e_1^\bot\to e_2^\bot$ is  an isometry of the quadratic vector spaces $(e_1^\bot,\tau_1)$ and $(e_2^\bot,\tau_2)$, and, moreover, (\ref{NN}) implies
$$
N_1|_{e_1^\bot}(x)=N_2|_{e_2^\bot}(\phi(x)),
$$
thus  $(e_1^\bot,\tau_1, N_1|_{e_1^\bot})$ and $(e_2^\bot,\tau_2, N_2|_{e_2^\bot})$ are equivalent, which finishes the proof.

\end{proof}

\begin{proposition}\label{pro:th:1}
Let $(W,Q, u)$ be an eiconal triple. Then the cubic form
\begin{equation}\label{Norma}
N(\mathbf{x})\equiv N_u(\mathbf{x})\doteq x_0^3-\frac{3x_0Q(x)}{2} +\frac{u(x)}{\sqrt{2}}, \quad \mathbf{x}=(x_0,x)\in V,
\end{equation}
on the vector space $V=\F\times W$ is Jordan with respect to the basepoint $\mathbf{e}=(1,0)$. In particular, the associated trace bilinear form is
\begin{equation}\label{TT}
T(\mathbf{x};\mathbf{y})=3x_0y_0+3Q(x;y)
\end{equation}
and the Jordan multiplication structure on $J(V,N,c)$ is determined explicitly by
\begin{equation}\label{multi01}
\mathbf{x}^2=(x_0^2+Q(x), \,2x_0x+\frac{1}{3\sqrt{2}}\nabla u(x)), \quad \mathbf{x}=(x_0,x).
\end{equation}
Two equivalent eiconal triples produce isomorphic Jordan algebras.

\end{proposition}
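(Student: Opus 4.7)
The plan is to compute from the explicit cubic $N$ in (\ref{Norma}) all the ingredients of the Springer construction, verify the two conditions of Definition~\ref{def:1}, and then read off the multiplication directly from (\ref{sharpidentity}). The functorial statement for equivalent eiconals will then follow almost formally.

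Since $N(\mathbf{e})=1$, the vector $\mathbf{e}=(1,0)$ is a basepoint. Expanding $N(\mathbf{e}+t\mathbf{x})$ in powers of $t$ one reads off $\Trace(\mathbf{x})=3x_0$ from (\ref{traceform}), and $S(\mathbf{x})=\partial_{x_0}N(\mathbf{x})=3x_0^2-\tfrac{3}{2}Q(x)$ with polarization $S(\mathbf{x};\mathbf{y})=6x_0y_0-3Q(x;y)$; substituting into (\ref{tracec}) yields (\ref{TT}), whose nondegeneracy is inherited from $Q$. This settles part~(i) of Definition~\ref{def:1}. For the sharp map, the relation $T(\mathbf{x}^{\#};\mathbf{y})=N(\mathbf{x};\mathbf{y})$ combined with the chain rule and the defining equation (\ref{eqGrad}) of $\nabla u$ splits into its $y_0$-component and its $W$-component and gives
\begin{equation*}
\mathbf{x}^{\#}=\bigl(x_0^2-\tfrac{1}{2}Q(x),\; -x_0 x+\tfrac{1}{3\sqrt{2}}\nabla u(x)\bigr).
\end{equation*}

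To establish the adjoint identity (\ref{adjoint}), I write $\mathbf{x}^{\#}=(a_0,a)$ and compute $\mathbf{x}^{\#\#}=(a_0^2-\tfrac{1}{2}Q(a),\,-a_0 a+\tfrac{1}{3\sqrt{2}}\nabla u(a))$ coordinatewise. The $\F$-component reduces to $x_0 N(\mathbf{x})$ by direct algebraic expansion using only Euler's identity $Q(x;\nabla u(x))=3u(x)$ and the eiconal equation $Q(\nabla u(x))=9Q(x)^2$. The $W$-component is the main technical obstacle and boils down to the derived identity
\begin{equation*}
\nabla u(\nabla u(x))=54\,u(x)\,x-9\,Q(x)\,\nabla u(x).
\end{equation*}
I propose to obtain this by differentiating the eiconal equation twice. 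One differentiation gives the ``eigenvector" relation $\Hess u(x)\,\nabla u(x)=18\,Q(x)\,x$; differentiating this vector identity once more in an arbitrary direction and exploiting the triple symmetry $\Hess u(x)\,y=\Hess u(y)\,x$ valid for cubic $u$, then specializing the free direction to $\nabla u(x)$ and invoking the Euler relation $\Hess u(z)\,z=2\nabla u(z)$ for the quadratic map $\nabla u$, yields the displayed identity. With this in hand, expanding $\nabla u(a)$ via the polarization of the quadratic map $\nabla u$ and regrouping terms produces $-a_0 a+\tfrac{1}{3\sqrt{2}}\nabla u(a)=N(\mathbf{x})\,x$, completing part~(ii).

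It remains to deduce the multiplication and the functoriality. The formula (\ref{sharpidentity}) reads $\mathbf{x}^2=\mathbf{x}^{\#}+\Trace(\mathbf{x})\mathbf{x}-S(\mathbf{x})\mathbf{e}$, and inserting the expressions for $\mathbf{x}^{\#}$, $\Trace(\mathbf{x})$, $S(\mathbf{x})$ and $\mathbf{e}$ cancels all scalar pieces except $x_0^2+Q(x)$ in the first coordinate and assembles $2x_0 x+\tfrac{1}{3\sqrt{2}}\nabla u(x)$ in the second, which is exactly (\ref{multi01}). Finally, if $O:(W,Q)\to(\tilde W,\tilde Q)$ is an isometry with $\tilde u\circ O=u$, the linear map $\Phi:V\to\tilde V$, $\Phi(x_0,x)=(x_0,Ox)$, satisfies $\tilde N\circ\Phi=N$ directly from (\ref{Norma}); consequently $\Phi$ preserves $\Trace$, $S$, the sharp operation, and therefore the multiplication (\ref{multi01}), so $\Phi$ is a Jordan algebra isomorphism.
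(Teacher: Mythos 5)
Your proposal is correct and follows essentially the same route as the paper: after computing $\Trace$, $S$, $T$ and the sharp map, the whole proof hinges on the identity $\nabla u(\nabla u(x))=54\,u(x)\,x-9\,Q(x)\,\nabla u(x)$, which you obtain exactly as the paper does (differentiating the eiconal equation twice and using the symmetry of the Hessian and Euler's relation — the paper phrases this with the polarized map $h(x;y)=\tfrac12\partial_y\nabla u(x)$ instead of $\Hess u$, but the computation is identical). The only minor divergence is the functoriality step, where you deduce that $\Phi$ preserves the sharp map from $\tilde N\circ\Phi=N$ and $\tilde T(\Phi\cdot;\Phi\cdot)=T$, whereas the paper verifies $\psi(\mathbf{x})^2=\psi(\mathbf{x}^2)$ directly via $\nabla u(x)=O^{-1}\nabla\tilde u(Ox)$; both are valid and equally short.
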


\begin{proof}
Our first step is to establish an auxiliary relation (\ref{HHy2}) below. To this end, we recall that
\begin{equation}\label{QQ}
Q(\nabla u(x);y)=\partial_y u|_{x}\equiv u(x;y) \quad \forall y\in W,
\end{equation}
and notice that the covariant gradient of $u$ is a quadratic endomorphism of $W$ and its normalized polarization
\begin{equation}\label{hessdef}
h(x;y)=\frac{1}{2}(\nabla u(x+y)-\nabla u(x)-\nabla u(y))
\end{equation}
is obviously a symmetric $\F$-bilinear map. It follows from (\ref{hessdef}) and the homogeneity of $\nabla u(x)$  that
\begin{equation}\label{hnabla}
h(x)\doteq  h(x;x)=\nabla u(x).
\end{equation}
Combining (\ref{hessdef}) and (\ref{QQ}) with (\ref{full}) we find
$$
2Q(h(x;y);\,z)=Q(\nabla u(x+y)-\nabla u(x)-\nabla u(y);\,z)=u(x+y;z)-u(x;z)-u(y;z)=u(x;y;z)
$$
for all $x,y,z\in V$ which implies
\begin{equation}\label{HessianX2}
Q(x;\, h(y;z))=Q(h(x;y)\, z).
\end{equation}
Now rewrite (\ref{equivariant}) by virtue of (\ref{hnabla}) as
\begin{equation}\label{Ahx}
Q(h(x))=9Q^2(x),
\end{equation}
hence applying the directional derivative in the direction $y$ to the latter equation yields
\begin{equation}\label{Axx}
Q(h(x);\,h(x;y))=9Q(x)\,Q(y;x)\equiv 9Q(y;\,Q(x)x),
\end{equation}
where $Q(x,y)$ is defined by (\ref{Qpol}).
Using  (\ref{HessianX2}) we find for the left hand side of (\ref{Axx}) that $Q(h(x);\,h(x;y)))=Q(y;\,h(x;h(x))),$
therefore by the non-degeneracy of $Q$,
\begin{equation}\label{HHy}
h(x;h(x))=9Q(x)x.
\end{equation}
Polarizing the latter identity, we obtain
\begin{equation*}
h(y;h(x))+2h(x;h(x;y))=18Q(x;y)x+9Q(x)y,
\end{equation*}
thus,  setting  $y=h(x)$ in the obtained identity yields
\begin{equation}\label{HHy1}
h(h(x))+2h(x;h(x;h(x)))=18Q(x;h(x))x+9Q(x)h(x),
\end{equation}
where we have used $h(h(x);h(x))=h(h(x))$. Moreover, by (\ref{HHy})
$$
h(x;h(x;h(x)))=9Q(x)h(x;x)=9Q(x)h(x),
$$
and by (\ref{hnabla}) and the homogeneity of $u$
\begin{equation}\label{wehave}
Q(x;\,h(x))=Q(x;\nabla u(x))=\partial_x u|_{x}=3u(x),
\end{equation}
therefore (\ref{HHy1}) becomes
\begin{equation}\label{HHy2}
\begin{split}
h(h(x))
%&\equiv h(h(x);h(x))\\
&=18Q(x;h(x))x-9Q(x)h(x)
%&=6u(x)x+9Q(x)h(x)-18Q(x)h(x;x),
=54u(x)x-9Q(x)h(x).
\end{split}
\end{equation}

Now we are ready to verify that (\ref{Norma}) is Jordan. Notice that $N(\mathbf{e})=1$, hence $\mathbf{e}$ is a basepoint of $N$. Let $\mathbf{x}=(x_0,x)$ and $\mathbf{y}=(y_0,y)$ are chosen arbitrarily in $V=\F\times W$. Since by (\ref{HessianX2})
$$
Q(x;h(x;y))=Q(y;h(x;x))=Q(y;h(x)),
$$
we find
\begin{equation}\label{direct}
\begin{split}
N(\mathbf{x};\mathbf{y})\equiv \partial_\mathbf{y} N|_\mathbf{x}
&=3\biggl(x_0^2-\frac{Q(x)}{2}\biggr)y_0-3x_0Q(x;y)+\frac{ Q(y;h(x))}{\sqrt{2}}.
\end{split}
\end{equation}
Setting  $\mathbf{x}=\mathbf{e}$ in (\ref{direct}) we find by (\ref{traceform}) that
$\Trace(\mathbf{y})=\partial_\mathbf{y} N|_\mathbf{e}=3y_0,$  and setting $\mathbf{y}=\mathbf{e}$ in (\ref{direct}), we find by (\ref{SPUR}) that
$S(\mathbf{x})=\partial_\mathbf{e }N|_\mathbf{x}=3(x_0^2-\frac{Q(x)}{2})$, hence
\begin{equation*}\label{SS}
S(\mathbf{x};\mathbf{y})=S(\mathbf{x}+\mathbf{y})-S(\mathbf{x})-S(\mathbf{y})=6x_0y_0-3Q(x;y),
\end{equation*}
and therefore  (\ref{tracec}) yields
\begin{equation}\label{Tco}
T(\mathbf{x};\mathbf{y})=\Trace(\mathbf{x})\Trace(\mathbf{y})-S(\mathbf{x};\mathbf{y})=3x_0y_0+3Q(x;y),
\end{equation}
thus proving (\ref{TT}).
In order  to establish the adjoint identity  (\ref{adjoint}) note that $Q$ is nondegenerate by the assumption, hence $T$ is nondegenerate too, which implies that $\mathbf{x}^\#$ is well-defined. To determine the adjoint element explicitly we note that $N(\mathbf{x};\mathbf{y})=T(\mathbf{x}^\#;\mathbf{y})$, hence comparing (\ref{direct}) and (\ref{Tco}) readily yields
\begin{equation}\label{sharp}
\mathbf{x}^\#\equiv(x_0^\#,x^\#)=\biggl(x_0^2-\frac{Q(x)}{2}\,,\,\frac{h(x)}{3\sqrt{2}}-x_0x\biggr)
\end{equation}
Let us  denote $\mathbf{x}^{\#\#}\equiv(x_0^{\#\#},x^{\#\#})$. Then by (\ref{sharp})
\begin{equation}\label{sharp1}
\begin{split}
x_0^{\#\#}&=(x_0^\#)^2-\frac{Q(x^\#)}{2}=\biggl(x_0^2-\frac{Q(x)}{2}\biggr)^2-\frac{Q(x^\#)}{2}
\end{split}
\end{equation}
where by virtue (\ref{wehave}) and (\ref{Ahx}),
\begin{equation*}
\begin{split}
Q(x^\#)&=Q\bigl(\frac{h(x)}{3\sqrt{2}}- x_0x\bigr)\\
&=x_0^2Q(x)-\frac{\sqrt{2}}{3}x_0Q(x;h(x))+\frac{ Q(h(x))}{18}\\
&=x_0^2Q(x)-\sqrt{2} x_0u(x)+\frac{Q(x)^2}{2},
\end{split}
\end{equation*}
and substituting this into  (\ref{sharp1}) yields
\begin{equation}\label{sharp2}
\begin{split}
x_0^{\#\#}&=N(\mathbf{x})x_0.
\end{split}
\end{equation}

Arguing as above one obtains
\begin{equation}\label{sharp3}
\begin{split}
x^{\#\#}&=- x_0^\#x^\#+\frac{1}{3\sqrt{2}}\, h(x^\#)\\
&=(x_0^2-\frac{Q(x)}{2})\bigl( x_0x-\frac{ h(x)}{3\sqrt{2}}\,\bigr)+\frac{1}{3\sqrt{2}}\,h\biggl(\frac{ h(x)}{3\sqrt{2}}-x_0x\biggr),
\end{split}
\end{equation}
where using (\ref{HHy}), (\ref{HHy2}) one has
\begin{equation*}\label{sharp4}
\begin{split}
h\biggl(\frac{ h(x)}{3\sqrt{2}}-x_0x\biggr)&=
\frac{h(h(x))}{18} -\frac{\sqrt{2}\, x_0}{3}h(x;h(x))+x_0^2h(x) \\
&=\frac{6u(x)x-Q(x)h(x)}{2}-3\sqrt{2} x_0Q(x)x+x_0^2h(x) \\
&=3(u(x)-\sqrt{2} x_0 Q(x))x+(x_0^2-\frac{Q(x)}{2})h(x)
\end{split}
\end{equation*}
thus (\ref{sharp3}) implies $x^{\#\#}=N(\mathbf{x})x.$
Combining this with (\ref{sharp2})  yields the required adjoint identity $\mathbf{x}^{\#\#}=N(\mathbf{x})\mathbf{x}$ and proves that $N(\mathbf{x})$ is Jordan.

Next,  (\ref{multi01}) follows immediately from the sharp identity (\ref{sharpidentity})
$$
\mathbf{x}^{ 2}=\mathbf{x}^\#+\Trace(\mathbf{x})\mathbf{x}-S(\mathbf{x})\mathbf{e}
$$
and $\Trace(\mathbf{x})=3x_0$, $S(\mathbf{x})=3x_0^2-\frac{3}{2}Q(x)$.

Finally, in order to finish the proof, we suppose that two eiconal triples $(W,Q,u)$ and $(\tilde W,\tilde Q, \tilde u)$ are equivalent, i.e.  there exists a linear map $O:(W,Q)\to (\tilde W,\tilde Q)$ such that $O$ is invertible, $\tilde Q(Ox)=Q(x)$ for all $x\in W$, and $u(x)=\tilde u(Ox)$.
The map $\psi:J(u)\to J(\tilde u)$ by $\psi(x_0,x)=( x_0,O\tilde x)$ is obviously a isomorphism of $W=\F\times V$ and $\tilde{W}=\F\times \tilde{V}$ on the level of vector spaces. Furthermore,  applying (\ref{multi01}) we obtain
\begin{equation}\label{Qhess}
\begin{split}
\psi(\mathbf{x})^{2}&=(x_0^2+\tilde Q(Ox),\,2x_0\,Ox+\frac{1}{3\sqrt{2}} \nabla \tilde u(Ox)).
\end{split}
\end{equation}
On the other hand,
$$
\partial_y u|_{x}\equiv \lim_{t\to 0}\frac{u(x+ty)-u(x)}{t}=\lim_{t\to 0}\frac{\tilde u(Ox+tOy)-\tilde u(Ox)}{t}\equiv
\partial_{Oy} \tilde u|_{Ox}
$$
which yields by the definition of the covariant gradient
$$
Q(\nabla u(x);y)=\partial_y u|_{x}=\partial_{Oy} \tilde u|_{Ox}=\tilde Q(\nabla \tilde u(Ox);Oy)=
Q(O^{-1}\nabla \tilde u(Ox);y),
$$
hence by nondegeneracy of $Q$, $\nabla u(x)=O^{-1}\nabla \tilde u(Ox)$. Hence  (\ref{Qhess}) yields
\begin{equation*}
\begin{split}
\psi(\mathbf{x})^{2}
&=\bigl(x_0+Q(x),\,O(2x_0x+\frac{1}{3\sqrt{2}} \nabla u(x))\bigr)=\psi(\mathbf{x}^{ 2}).
\end{split}
\end{equation*}
The polarization of the obtained identity implies that $\psi(\mathbf{x}\bullet \mathbf{y})=\psi(\mathbf{x})\bullet \psi(\mathbf{y})$, hence $\psi$ is an isomorphism of the Jordan algebras, as required.

\end{proof}

\begin{remark}
It also is convenient to exhibit the Jordan multiplicative structure on $J(V,N,e)$  explicitly by polarizing the quadratic identity (\ref{multi01}), namely
\begin{equation}\label{multi0}
\mathbf{x}\bullet \mathbf{y}=(x_0y_0+Q(x,y), \,x_0y+y_0x+\frac{1}{3\sqrt{2}}\,h(x;y)),
\end{equation}
where $h$ is defined by (\ref{hessdef}).
\end{remark}

To finish the proof of Theorem~\ref{Theorem} it suffices  to verify the following

\begin{proposition}\label{cor:1}
Let $\alpha:\mathcal{J}\to \mathcal{E}$ and $\beta:\mathcal{E}\to \mathcal{J}$ are the correspondences defined in Proposition~\ref{cor:from} and Proposition~\ref{pro:th:1}, respectively. Then $\alpha  \beta=\mathrm{id}_\mathcal{E}$ and $\beta  \alpha=\mathrm{id}_\mathcal{J}$.

\end{proposition}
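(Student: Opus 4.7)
The proof proposal is to verify both compositions by direct unwinding of the formulas in Proposition~\ref{cor:from} and Proposition~\ref{pro:th:1}, using only the quantities already expressed in Section~\ref{sec:Aux} and the fact (Springer construction) that a Jordan cubic form together with its basepoint determines the Jordan multiplicative structure uniquely.

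For $\alpha\circ\beta=\mathrm{id}_\mathcal{E}$, I would start with an arbitrary eiconal triple $(W,Q,u)$ and feed it into $\beta$, obtaining $J(V,N,\mathbf{e})$ with $V=\F\times W$ and $N$ given by (\ref{Norma}). From (\ref{TT}) in Proposition~\ref{pro:th:1} I read off $\Trace(\mathbf{x})=3x_0$, so the trace-free subspace is $\mathbf{e}^\bot=\{(0,x):x\in W\}$, which is canonically identified with $W$ via the linear isometry $\iota:W\to\mathbf{e}^\bot$, $x\mapsto(0,x)$. Again by (\ref{TT}), the induced quadratic form is $\tau(\iota x)=\tfrac13 T(\iota x;\iota x)=Q(x)$, so $\iota$ is an isometry of $(W,Q)$ onto $(\mathbf{e}^\bot,\tau|_{\mathbf{e}^\bot})$. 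Finally, $\sqrt{2}\,N|_{\mathbf{e}^\bot}(\iota x)=\sqrt{2}\cdot\frac{u(x)}{\sqrt{2}}=u(x)$, so the resulting eiconal triple $\alpha(\beta(W,Q,u))$ is equivalent to $(W,Q,u)$ via $\iota$.

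For $\beta\circ\alpha=\mathrm{id}_\mathcal{J}$, I would start with $J=J(V,N,e)$, apply $\alpha$ to obtain $(\mathbf{e}^\bot,\tau|_{\mathbf{e}^\bot},\sqrt{2}N|_{\mathbf{e}^\bot})$, and then apply $\beta$ to obtain $J'=J(V',N',e')$ on $V'=\F\times\mathbf{e}^\bot$ with
\begin{equation*}
N'(x_0,x)=x_0^3-\tfrac{3}{2}x_0\tau(x)+N(x),\qquad x\in\mathbf{e}^\bot.
\end{equation*}
The natural candidate for the isomorphism is the linear bijection $\phi:V'\to V$, $(x_0,x)\mapsto x_0 e+x$, whose existence follows from the orthogonal decomposition $V=\F e\oplus\mathbf{e}^\bot$. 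Expanding $N(x_0 e+x)$ by (\ref{equation}) and using $N(e)=1$, $N(e;x)=\Trace(x)=0$ for $x\in\mathbf{e}^\bot$, $N(x;e)=S(x)$, together with the identity $S(x)=-\tfrac{3}{2}\tau(x)$ on $\mathbf{e}^\bot$ (which follows by polarizing (\ref{bullet}): $2S(x)=\Trace(x)^2-\Trace(x^2)=-3\tau(x)$ on $\mathbf{e}^\bot$), I obtain $N(\phi(\mathbf{x}))=x_0^3+x_0S(x)+N(x)=N'(\mathbf{x})$. Since $\phi(e')=e$ and $N\circ\phi=N'$, the linearised invariants match: $\Trace\circ\phi=\Trace'$, $S\circ\phi=S'$, and hence $\phi$ conjugates the sharp maps, $\phi(\mathbf{x}^{\#'})=\phi(\mathbf{x})^\#$; by the multiplication formula (\ref{XYtimes}) this forces $\phi$ to be an algebra isomorphism, as required.

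There is no genuine obstacle, since both verifications reduce to transparent algebraic manipulations once the correct identifications are fixed; the only subtle point is recognising that the algebraic invariants $\Trace,S,\#,\bullet$ of $J'$ transport correctly under $\phi$, which is precisely the content of the uniqueness clause of the Springer construction. The key computational step to display cleanly is the expansion $N(x_0 e+x)=x_0^3+x_0 S(x)+N(x)$ combined with $S(x)=-\tfrac{3}{2}\tau(x)$ on $\mathbf{e}^\bot$, matching (\ref{Norma}) term by term.
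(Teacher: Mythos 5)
Your proposal is correct. The half $\alpha\beta=\mathrm{id}_\mathcal{E}$ is exactly the paper's argument: read off $\mathbf{e}^\bot=0\times W$ from (\ref{TT}), note $\tau=Q$ and $\sqrt{2}\,N|_{\mathbf{e}^\bot}=u$ there, and conclude the triples are equivalent via the obvious isometry. For $\beta\alpha=\mathrm{id}_\mathcal{J}$ you use the same identification $\phi(x_0,x)=x_0e+x$ as the paper, but you certify that it is an algebra isomorphism by a genuinely different mechanism. The paper works at the level of the product: it computes $\phi(\mathbf{x}^{2})-\phi(\mathbf{x})^{2}$ directly from the squaring formula (\ref{multi01}), using $\nabla N(x)=3(x^\#)^{\cbot}$ from (\ref{nablaN}) and the sharp identity (\ref{sharpidentity}) to show the difference vanishes. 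You work at the level of the norm: you verify the single scalar identity $N(\phi(\mathbf{x}))=N'(\mathbf{x})$ (your expansion $N(x_0e+x)=x_0^3+x_0S(x)+N(x)$ and the identity $S(x)=-\tfrac32\tau(x)$ on $\cbot$ are both correct; the latter comes from setting $y=x$ in (\ref{bullet}) together with (\ref{scalproduct})), and then transport $\Trace$, $S$, $T$, $\#$ and finally the product via (\ref{XYtimes}). This buys a shorter and more conceptual verification, and it generalizes immediately to any basepoint-preserving isometry of Jordan cubic forms. The one hypothesis you should state explicitly is that the multiplication on the \emph{source} algebra $J$ is itself the Springer multiplication determined by $(N,e)$ through (\ref{XYtimes}); the implication ``$N\circ\phi=N'$ and $\phi(e')=e$ force $\phi$ to be an algebra isomorphism'' uses this on both sides, and it holds here only because the paper's convention $J=J(V,N,e)$ means the Springer-constructed algebra. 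With that remark made, your argument is complete and equivalent in content to the paper's.
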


\begin{proof}
(i) We prove first that $\beta\alpha=id_{\mathcal{J}}$. Let  $J=J(V,N,e)$ and let $\alpha(J)=(e^\bot, \tau, u)$ be the cubic eiconal associated to $J$ by virtue of Proposition~\ref{cor:from}, where $u=\sqrt{2}N|_{e^\bot}$. By Proposition~\ref{pro:th:1}
$$
\beta(\alpha(J))=J(\widehat{V},\widehat{N},\mathbf{e}), \qquad \widehat{V}=\F\times \cbot,
$$
where $\widehat{N}$ is found  by (\ref{Norma}) as
\begin{equation*}
\begin{split}
\widehat{N}(\mathbf{x})&=x_0^3-\frac{3}{2} x_0\tau(x)+\frac{1}{\sqrt{2}}\,u(x)\\
&=x_0^3-\frac{3}{2} x_0\tau(x)+N(x), \qquad \mathbf{x}=(x_0,x)\in \widehat{V}.
\end{split}
\end{equation*}
We claim that $\phi(\mathbf{x})=x_0e+x:\beta(\alpha(J))\to J$ is an isomorphism of the Jordan algebras. Notice that $\phi$ is an $\F$-linear map and $\ker \phi=\{\mathbf{0}\}$. Thus, it suffices to verify that $\phi$ is compatible with the multiplicative structure  which is equivalent to showing that $\phi(\mathbf{x}^{ 2})=\phi(\mathbf{x})^{ 2}$. To this end we notice that by (\ref{multi01}) and the definition of $u$,
$
\mathbf{x}^{ 2}
=(x_0^2+Q(x), 2x_0x+ \frac{1}{3}\nabla N    (x)).
$
On the other hand, by the definition of $\phi$,
$$
\phi(\mathbf{x})^{ 2}=x_0^2e+2x_0x+x^{ 2},
$$
which readily yields
\begin{equation}\label{oo}
\begin{split}
\phi(\mathbf{x}^{ 2})-\phi(\mathbf{x})^{ 2}&= \frac{1}{3}\nabla N(x)-x^{ 2}+T(x)e,
\end{split}
\end{equation}
where the gradient is found by  (\ref{nablaN}) as follows: $\nabla N(x)=(\Nabla N(x))^\cbot=3(x^\#)^\cbot$. Furthermore, using $\Trace (x)=0$ and (\ref{sharpidentity}), we obtain
\begin{equation*}
\begin{split}
(x^\#)^\cbot&=(x^{ 2}-\Trace (x) x+S(x)e)^\cbot\equiv (x^{ 2})^\cbot\\
&=x^{ 2}-T(x^{ 2};e)e=x^{ 2}-T(x;x)e\\
&=x^{ 2}-T(x)e.
\end{split}
\end{equation*}
Thus (\ref{oo}) yields $\phi(\mathbf{x}^{ 2})-\phi(\mathbf{x})^{ 2}=0$,  which implies  the desired isomorphy of $\phi$.

(ii) In the converse direction, let us consider an eiconal triple $\epsilon=(W,Q,u)$ and associate to it the Jordan algebra $\beta(\epsilon)=J(V, N, \mathbf{e})$, where $V=\F\times W$, $N$ is defined by (\ref{Norma}), and $\mathbf{e}=(1,0)$. By (\ref{TT}), $\mathbf{e}^\bot=0\times W$, and  also by (\ref{Norma}) and (\ref{TT}), $N(\mathbf{x})=\frac{u(x)}{\sqrt{2}}$ and $\tau(\mathbf{x})=Q(x)$ for $\mathbf{x}=(0,x)\in \mathbf{e}^\bot$. Then Proposition~\ref{cor:from} yields:
$$
\alpha\beta(\epsilon)=(0\times W, Q, u)
$$
which is trivially equivalent to $\epsilon$. The proposition is proved.

\end{proof}

\bibliographystyle{plain}%{amsalpha}%{chicagoa}%
%\bibliography{}
\bibliography{main_references}

\begin{thebibliography}{10}

\bibitem{Duff1}
L.~Borsten, D.~Dahanayake, M.~J. Duff, H.~Ebrahim, and W.~Rubens.
\newblock Black holes, qubits and octonions.
\newblock {\em Phys. Rep.}, 471(3-4):113--219, 2009.

\bibitem{Cecil}
Th.~E. Cecil.
\newblock Isoparametric and {D}upin hypersurfaces.
\newblock {\em SIGMA Symmetry Integrability Geom. Methods Appl.}, 4:Paper 062,
  28, 2008.

\bibitem{Chaput02}
P.-E. Chaput.
\newblock Severi varieties.
\newblock {\em Math. Z.}, 240(2):451--459, 2002.

\bibitem{Chaput03}
P.-E. Chaput.
\newblock Scorza varieties and {J}ordan algebras.
\newblock {\em Indag. Math. (N.S.)}, 14(2):169--182, 2003.

\bibitem{Eting}
P.~Etingof, D.~Kazhdan, and A.~Polishchuk.
\newblock When is the {F}ourier transform of an elementary function elementary?
\newblock {\em Selecta Math. (N.S.)}, 8(1):27--66, 2002.

\bibitem{FKbook}
J.~Faraut and A.~Kor{\'a}nyi.
\newblock {\em Analysis on symmetric cones}.
\newblock Oxford Mathematical Monographs. The Clarendon Press Oxford University
  Press, New York, 1994.
\newblock Oxford Science Publications.

\bibitem{JacobOsaka}
N.~Jacobson.
\newblock Generic norm of an algebra.
\newblock {\em Osaka Math. J.}, 15:25--50, 1963.

\bibitem{JacobsonBook}
N.~Jacobson.
\newblock {\em Structure and representations of {J}ordan algebras}.
\newblock American Mathematical Society Colloquium Publications, Vol. XXXIX.
  American Mathematical Society, Providence, R.I., 1968.

\bibitem{Jordan33A}
P.~Jordan.
\newblock \"{U}ber verallgemeinerungsm\"{o}glichkeiten des formalismus der
  quantenmechanik.
\newblock {\em Nachr. Akad. Wiss. G\"ottingen. Math. Phys.}, 41:209--217, 1933.

\bibitem{JordanNeumann}
P.~Jordan, J.~von Neumann, and E.~Wigner.
\newblock On an algebraic generalization of the quantum mechanical formalism.
\newblock {\em Ann. of Math. (2)}, 35(1):29--64, 1934.

\bibitem{Kru}
S.~Krutelevich.
\newblock Jordan algebras, exceptional groups, and {B}hargava composition.
\newblock {\em J. Algebra}, 314(2):924--977, 2007.

\bibitem{McCrbook}
K.~McCrimmon.
\newblock {\em A taste of {J}ordan algebras}.
\newblock Universitext. Springer-Verlag, New York, 2004.

\bibitem{Mun1}
H.~F. M{\"u}nzner.
\newblock Isoparametrische {H}yperfl\"achen in {S}ph\"aren.
\newblock {\em Math. Ann.}, 251(1):57--71, 1980.

\bibitem{Mun2}
H.~F. M{\"u}nzner.
\newblock Isoparametrische {H}yperfl\"achen in {S}ph\"aren. {II}. \"{U}ber die
  {Z}erlegung der {S}ph\"are in {B}allb\"undel.
\newblock {\em Math. Ann.}, 256(2):215--232, 1981.

\bibitem{Nadir}
N.~Nadirashvili, V.G. Tkachev, and S.~Vl{\u{a}}du{\c{t}}.
\newblock A non-classical solution to a {H}essian equation from {C}artan
  isoparametric cubic.
\newblock {\em Adv. Math.}, 231(3-4):1589--1597, 2012.

\bibitem{Pirio1}
L.~Pirio and F.~Russo.
\newblock Extremal varieties 3-rationally connected by cubics, quadro-quadric
  cremona transformations and rank 3 jordan algebras.
\newblock {\em arXiv:1109.3573}, 2011.

\bibitem{Pirio2}
L.~Pirio and F.~Russo.
\newblock Quadro-quadric cremona transformations in low dimensionsv via the
  jc-correspondence.
\newblock {\em arXiv:1204.0428}, 2012.

\bibitem{Springer59}
T.~A. Springer.
\newblock On a class of {J}ordan algebras.
\newblock {\em Nederl. Akad. Wetensch. Proc. Ser. A 62 = Indag. Math.},
  21:254--264, 1959.

\bibitem{TkCartan}
V.~G. Tkachev.
\newblock A generalization of {C}artan's theorem on isoparametric cubics.
\newblock {\em Proc. Amer. Math. Soc.}, 138(8):2889--2895, 2010.

\end{thebibliography}

\end{document}